\documentclass[11pt]{article}\UseRawInputEncoding
\usepackage{amssymb,amsfonts,amsmath,latexsym,epsf,tikz,url}
\usepackage[usenames,dvipsnames]{pstricks}
\usepackage{pstricks-add}
\usepackage{epsfig}
\usepackage{pst-grad} 
\usepackage{pst-plot} 
\usepackage[space]{grffile} 
\usepackage{etoolbox} 
\makeatletter 
\patchcmd\Gread@eps{\@inputcheck#1 }{\@inputcheck"#1"\relax}{}{}
\makeatother

\newtheorem{theorem}{Theorem}[section]
\newtheorem{proposition}[theorem]{Proposition}

\newtheorem{corollary}[theorem]{Corollary}

\newtheorem{remark}[theorem]{Remark}

\newcommand{\qed}{\hfill $\square$\medskip}

\textwidth 14.5cm
\textheight 21.0cm
\oddsidemargin 0.4cm
\evensidemargin 0.4cm
\voffset -1cm

\begin{document}

\title{Some results on the super domination number of a graph II}

\author{
Nima Ghanbari
}

\date{\today}

\maketitle

\begin{center}
Department of Informatics, University of Bergen, P.O. Box 7803, 5020 Bergen, Norway\\
{\tt   Nima.Ghanbari@uib.no}
\end{center}


\begin{abstract}
Let $G=(V,E)$ be a simple graph. A dominating set of $G$ is a subset $S\subseteq V$ such that every vertex not in $S$ is adjacent to at least one vertex in $S$.
The cardinality of a smallest dominating set of $G$, denoted by $\gamma(G)$, is the domination number of $G$. A dominating set $S$ is called a super dominating set of $G$, if for every vertex  $u\in \overline{S}=V-S$, there exists $v\in S$ such that $N(v)\cap \overline{S}=\{u\}$. The cardinality of a smallest super dominating set of $G$, denoted by $\gamma_{sp}(G)$, is the super domination number of $G$. In this paper, we obtain more results on the super domination number of graphs which is modified by an operation on vertices. Also, we present some sharp bounds for super domination number of chain and bouquet of pairwise disjoint connected graphs. 
\end{abstract}

\noindent{\bf Keywords:} domination number, super dominating set, chain, bouquet

\medskip
\noindent{\bf AMS Subj.\ Class.:} 05C69, 05C76

\section{Introduction}
 
Let $G = (V,E)$ be a  graph with vertex set $V$ and edge set $E$. Throughout this paper, we consider graphs without loops and directed edges.
For each vertex $v\in V$, the set $N(v)=\{u\in V | uv \in E\}$ refers to the open neighbourhood of $v$ and the set $N[v]=N(v)\cup \{v\}$ refers to the closed neighbourhood of $v$ in $G$. The degree of $v$, denoted by $\deg(v)$, is the cardinality of $N(v)$.
 A set $S\subseteq V$ is a  dominating set if every vertex in $\overline{S}= V- S$ is adjacent to at least one vertex in $S$.
The  domination number $\gamma(G)$ is the minimum cardinality of a dominating set in $G$. There are various domination numbers in the literature.
For a detailed treatment of domination theory, the reader is referred to \cite{domination}. 

\medskip

The concept of super domination number was introduced by Lema\'nska et al. in 2015 \cite{Lemans}. A dominating set $S$ is called a super dominating set of $G$, if for every vertex  $u\in \overline{S}$, there exists $v\in S$ such that $N(v)\cap \overline{S}=\{u\}$. The cardinality of a smallest super dominating set of $G$, denoted by $\gamma_{sp}(G)$, is the super domination number of $G$. We refer the reader to \cite{Alf,Dett,Nima,Kri,Kle,Zhu} for more details on super dominating set of a graph. 

\medskip

Let $G$ be a connected graph constructed from pairwise disjoint connected graphs
$G_1,\ldots ,G_n$ as follows. Select a vertex of $G_1$, a vertex of $G_2$, and identify these two vertices. Then continue in this manner inductively.  Note that the graph $G$ constructed in this way has a tree-like structure, the $G_i$'s being its building stones (see Figure \ref{Figure1}).

\begin{figure}[!h]
	\begin{center}
		\psscalebox{0.6 0.6}
{
\begin{pspicture}(0,-4.819607)(13.664668,2.90118)
\pscircle[linecolor=black, linewidth=0.04, dimen=outer](5.0985146,1.0603933){1.6}
\pscustom[linecolor=black, linewidth=0.04]
{
\newpath
\moveto(11.898515,0.66039336)
}
\pscustom[linecolor=black, linewidth=0.04]
{
\newpath
\moveto(11.898515,0.26039338)
}
\pscustom[linecolor=black, linewidth=0.04]
{
\newpath
\moveto(12.698514,0.66039336)
}
\pscustom[linecolor=black, linewidth=0.04]
{
\newpath
\moveto(10.298514,1.0603933)
}
\pscustom[linecolor=black, linewidth=0.04]
{
\newpath
\moveto(11.098515,-0.9396066)
}
\pscustom[linecolor=black, linewidth=0.04]
{
\newpath
\moveto(11.098515,-0.9396066)
}
\pscustom[linecolor=black, linewidth=0.04]
{
\newpath
\moveto(11.898515,0.66039336)
}
\pscustom[linecolor=black, linewidth=0.04]
{
\newpath
\moveto(11.898515,-0.9396066)
}
\pscustom[linecolor=black, linewidth=0.04]
{
\newpath
\moveto(11.898515,-0.9396066)
}
\pscustom[linecolor=black, linewidth=0.04]
{
\newpath
\moveto(12.698514,-0.9396066)
}
\pscustom[linecolor=black, linewidth=0.04]
{
\newpath
\moveto(12.698514,0.26039338)
}
\pscustom[linecolor=black, linewidth=0.04]
{
\newpath
\moveto(14.298514,0.66039336)
\closepath}
\psbezier[linecolor=black, linewidth=0.04](11.598515,1.0203934)(12.220886,1.467607)(12.593457,1.262929)(13.268515,1.0203933715820312)(13.943572,0.7778577)(12.308265,0.90039337)(12.224765,0.10039337)(12.141264,-0.69960666)(10.976142,0.5731798)(11.598515,1.0203934)
\psbezier[linecolor=black, linewidth=0.04](4.8362556,-3.2521083)(4.063277,-2.2959895)(4.6714916,-1.9655427)(4.891483,-0.99004078729821)(5.111474,-0.014538889)(5.3979383,-0.84551746)(5.373531,-1.8452196)(5.349124,-2.8449216)(5.6092343,-4.208227)(4.8362556,-3.2521083)
\psbezier[linecolor=black, linewidth=0.04](8.198514,-2.0396066)(6.8114076,-1.3924998)(6.844908,-0.93520766)(5.8785143,-1.6996066284179687)(4.9121203,-2.4640057)(5.6385145,-3.4996066)(6.3385143,-2.8396065)(7.0385146,-2.1796067)(9.585621,-2.6867135)(8.198514,-2.0396066)
\pscircle[linecolor=black, linewidth=0.04, dimen=outer](7.5785146,-3.6396067){1.18}
\psdots[linecolor=black, dotsize=0.2](11.418514,0.7403934)
\psdots[linecolor=black, dotsize=0.2](9.618514,1.5003934)
\psdots[linecolor=black, dotsize=0.2](6.6585145,0.7403934)
\psdots[linecolor=black, dotsize=0.2](3.5185144,0.96039337)
\psdots[linecolor=black, dotsize=0.2](5.1185145,-0.51960665)
\psdots[linecolor=black, dotsize=0.2](5.3985143,-2.5796065)
\psdots[linecolor=black, dotsize=0.2](7.458514,-2.4596066)
\rput[bl](8.878514,0.42039338){$G_i$}
\rput[bl](7.478514,-4.1196065){$G_j$}
\psbezier[linecolor=black, linewidth=0.04](0.1985144,0.22039337)(0.93261385,0.89943534)(2.1385605,0.6900083)(3.0785143,0.9403933715820313)(4.0184684,1.1907784)(3.248657,0.442929)(2.2785144,0.20039338)(1.3083719,-0.042142253)(-0.53558505,-0.45864862)(0.1985144,0.22039337)
\psbezier[linecolor=black, linewidth=0.04](2.885918,1.4892112)(1.7389486,2.4304078)(-0.48852357,3.5744174)(0.5524718,2.1502930326916756)(1.5934672,0.7261687)(1.5427756,1.2830372)(2.5062277,1.2429687)(3.46968,1.2029002)(4.0328875,0.5480146)(2.885918,1.4892112)
\psellipse[linecolor=black, linewidth=0.04, dimen=outer](9.038514,0.7403934)(2.4,0.8)
\psbezier[linecolor=black, linewidth=0.04](9.399693,1.883719)(9.770389,2.812473)(12.016343,2.7533927)(13.011008,2.856550531577144)(14.005673,2.9597082)(13.727474,2.4925284)(12.761896,2.2324166)(11.796317,1.9723049)(9.028996,0.9549648)(9.399693,1.883719)
\pscircle[linecolor=black, linewidth=0.04, dimen=outer](9.898515,-3.3396065){1.2}
\psellipse[linecolor=black, linewidth=0.04, dimen=outer](2.2985144,-1.1396066)(0.4,1.4)
\psellipse[linecolor=black, linewidth=0.04, dimen=outer](2.4985144,-3.3396065)(1.8,0.8)
\psdots[linecolor=black, dotsize=0.2](2.2985144,0.26039338)
\psdots[linecolor=black, dotsize=0.2](2.2985144,-2.5396066)
\psdots[linecolor=black, dotsize=0.2](8.698514,-3.3396065)
\psdots[linecolor=black, dotsize=0.2](9.898515,-2.1396067)
\psellipse[linecolor=black, linewidth=0.04, dimen=outer](10.298514,-1.5396066)(2.0,0.6)
\end{pspicture}
}
	\end{center}
	\caption{\label{Figure1} A graph with subgraph units  $G_1,\ldots , G_n$.}
\end{figure}
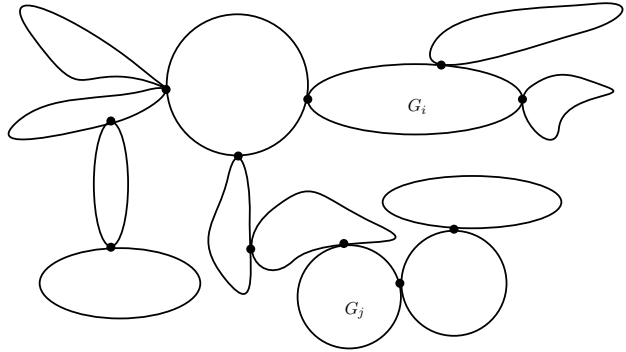

Usually  say that $G$ is obtained by point-attaching from $G_1,\ldots , G_n$ and that $G_i$'s are the primary subgraphs of $G$. A particular case of this construction is the decomposition of a connected graph into blocks (see \cite{Deutsch}).
We refer the reader to \cite{Alikhani1,Nima0,Moster} for more details and results in the concept of graphs from primary subgraphs.

\medskip

In this paper, we continue the study of super domination number of a graph. In Section 2, we  mention some previous results, also the definition of $G\odot v$ and find a shrap upper bound for the super domination number of that. In Section 3, we obtain some results on the chain of graphs that is a special case of graphs which are obtained by point-attaching from primary subgraphs. Finally, in Section 4, we find some sharp bounds  on the super domination number of the bouquet of graphs  which is another special case of graphs that are made by point-attaching.
    
\section{Super domination number of $G\odot v$}
$G\odot v$ is the graph obtained from $G$ by the removal of all edges between
 any pair of neighbours of $v$ \cite{Alikhani}. Some results in this operation can be found in \cite{Nima1}. In this section, we study the super domination number of $G\odot v$.
First, we state some known results.

	\begin{theorem}\cite{Lemans}\label{thm-1}
 Let $G$ be a graph of order $n$ which is not empty graph. Then,
 $$1\leq \gamma(G) \leq \frac{n}{2} \leq \gamma_{sp}(G) \leq n-1.$$
	\end{theorem}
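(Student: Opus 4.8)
The plan is to establish the chain as four separate inequalities, working from left to right, and to note at the outset that for the whole chain to hold the hypothesis ``not empty graph'' must be read as ruling out isolated vertices (i.e. $\delta(G)\geq 1$); indeed, for $G=K_2\cup K_1$ one has $\gamma(G)=2>\tfrac{n}{2}$, so the statement as literally written fails when isolated vertices are allowed. The two outer bounds are immediate. Since $G$ has order $n\geq 1$, every dominating set is nonempty, which gives $\gamma(G)\geq 1$. For the rightmost bound I would exhibit an explicit super dominating set of size $n-1$: pick a vertex $u$ having at least one neighbour (such a vertex exists because $G$ has an edge), and set $S=V\setminus\{u\}$. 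Then $\overline{S}=\{u\}$, and taking $v\in S$ to be any neighbour of $u$ yields $N(v)\cap\overline{S}=\{u\}$, which is exactly the super domination condition for the single vertex $u$; as $S$ is clearly dominating, it is a super dominating set, so $\gamma_{sp}(G)\leq n-1$.

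For the inequality $\gamma(G)\leq \tfrac{n}{2}$ I would reproduce Ore's classical argument. The key lemma is that if $G$ has no isolated vertices and $D$ is a \emph{minimal} dominating set, then its complement $V\setminus D$ is also a dominating set. To see this, take $v\in D$: by minimality $v$ is needed, so either $v$ has a private neighbour outside $D$, or $v$ has no neighbour inside $D$ at all; in the first case $v$ is dominated by $V\setminus D$ directly, and in the second case the absence of isolated vertices forces $v$ to have a neighbour in $V\setminus D$. Hence every vertex of $D$ is dominated by $V\setminus D$, and vertices of $V\setminus D$ dominate themselves. Applying this with $D$ a \emph{minimum} dominating set, both $D$ and $V\setminus D$ dominate $G$, so $\gamma(G)\leq \min(|D|,\,n-|D|)\leq \tfrac{n}{2}$.

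The remaining inequality $\tfrac{n}{2}\leq \gamma_{sp}(G)$ is the heart of the statement and follows from an injectivity argument read straight off the definition. Let $S$ be an arbitrary super dominating set. For each $u\in\overline{S}$ fix a witness $v=f(u)\in S$ with $N(v)\cap\overline{S}=\{u\}$; this defines a map $f\colon\overline{S}\to S$. I claim $f$ is injective: if $f(u_1)=f(u_2)=v$, then $\{u_1\}=N(v)\cap\overline{S}=\{u_2\}$, whence $u_1=u_2$. Therefore $|\overline{S}|\leq|S|$, that is $n-|S|\leq|S|$, so $|S|\geq \tfrac{n}{2}$; minimizing over all super dominating sets $S$ gives $\gamma_{sp}(G)\geq \tfrac{n}{2}$.

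I expect the main obstacle to be the $\gamma(G)\leq \tfrac{n}{2}$ step, since it is the only part that is not a one-line consequence of the definitions: it rests on the complement-of-a-minimal-dominating-set lemma and, crucially, on the no-isolated-vertices hypothesis, which is precisely the point at which the phrasing of the statement needs care. By contrast, once the witness map is set up correctly the super domination bounds are essentially bookkeeping, and the two outer inequalities are trivial.
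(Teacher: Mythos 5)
Your proof is correct, but there is nothing in the paper to compare it against: Theorem \ref{thm-1} is quoted from Lema\'nska et al.\ \cite{Lemans} and the paper supplies no proof of it. Your argument is essentially the standard one from the literature, and all four inequalities are handled soundly: $\gamma(G)\geq 1$ is trivial; the witness map $f\colon\overline{S}\to S$ is well defined precisely because $S$ is super dominating, and its injectivity gives $n-|S|\leq |S|$, hence $\gamma_{sp}(G)\geq n/2$; the set $V\setminus\{u\}$, for $u$ an endpoint of an edge, is a super dominating set of size $n-1$; and the middle bound is Ore's theorem, whose complement-of-a-minimal-dominating-set lemma you state and apply correctly (a minimum dominating set is minimal, so both $D$ and $V\setminus D$ dominate, giving $\gamma(G)\leq n/2$). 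Your remark about the hypothesis is also a genuine catch rather than pedantry: ``not empty graph'' (at least one edge) is not enough for $\gamma(G)\leq n/2$, as your example $K_2\cup K_1$ shows, and the correct hypothesis --- the one under which the result is stated in the original source --- is that $G$ has no isolated vertices. Your write-up correctly isolates this as the only step where that stronger hypothesis is needed; the other three inequalities survive with just one edge present.
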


	\begin{theorem}\cite{Lemans}\label{thm-2}
	\begin{itemize}
	\item[(i)]
	For a path graph $P_n$ with $n\geq 3$, $\gamma_{sp}(P_n)=\lceil \frac{n}{2} \rceil$.
	\item[(ii)]
	For a cycle graph $C_n$,
	\begin{displaymath}
\gamma_{sp}(C_n)= \left\{ \begin{array}{ll}
\lceil\frac{n}{2}\rceil & \textrm{if $n \equiv 0, 3 ~ (mod ~ 4)$, }\\
\\
\lceil\frac{n+1}{2}\rceil & \textrm{otherwise.}
\end{array} \right.
\end{displaymath}
\item[(iii)]
For the complete graph $K_n$, $\gamma_{sp}(K_n)=n-1$.
\item[(iv)]
For the complete bipartite graph $K_{n,m}$, $\gamma_{sp}(K_{n,m})=n+m-2$, where $min\{n,m\}\geq 2$.
\item[(v)]
For the star graph $K_{1,n}$, $\gamma_{sp}(K_{1,n})=n$.
\end{itemize}
	\end{theorem}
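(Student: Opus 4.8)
The unifying observation I would rely on is that a set $S$ is super dominating precisely when there is an injection $\phi\colon \overline{S}\to S$ with $N(\phi(u))\cap\overline{S}=\{u\}$ for every $u\in\overline{S}$: the map is injective because a vertex $v\in S$ with $N(v)\cap\overline{S}=\{u\}$ can be the private witness of $u$ alone, and this is exactly the defining condition. In particular $|\overline{S}|\le|S|$, which re-proves the bound $\gamma_{sp}(G)\ge n/2$ of Theorem \ref{thm-1} and, since $\gamma_{sp}$ is an integer, gives $\gamma_{sp}(G)\ge\lceil n/2\rceil$. I would first dispose of parts (iii)--(v) with this witness condition. For $K_n$ every $v\in S$ has $N(v)\cap\overline{S}=\overline{S}$, so a witness can exist only when $|\overline{S}|=1$; hence $\gamma_{sp}(K_n)\ge n-1$, attained by any $S=V\setminus\{u\}$. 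For $K_{n,m}$ with parts $A,B$, a witness of $u\in\overline{S}\cap A$ must lie in $B$ and has neighbourhood $A$, forcing $\overline{S}\cap A=\{u\}$; symmetrically $|\overline{S}\cap B|\le1$, so $|\overline{S}|\le2$ and $\gamma_{sp}\ge n+m-2$, with equality by deleting one vertex from each side (here $\min\{n,m\}\ge2$ is exactly what makes the two required witnesses available). The star $K_{1,n}$ is the degenerate case: a leaf in $\overline{S}$ could only be witnessed by the centre, so at most one vertex may be left outside $S$, giving $\gamma_{sp}(K_{1,n})=n$.

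For the path (i) the lower bound is immediate from $\gamma_{sp}(P_n)\ge\lceil n/2\rceil$, so the content is an explicit optimal construction. Writing the vertices as $v_1,\dots,v_n$, the plan is to cut the path into consecutive blocks of four and place $v_{4k+2},v_{4k+3}$ into $S$ in each full block; then $v_{4k+1}$ is witnessed by $v_{4k+2}$ and $v_{4k+4}$ by $v_{4k+3}$, since in each case the other neighbour of the witness already lies in $S$. The only care needed is at the right end, where the leftover $r=n\bmod 4$ vertices are patched: for $r=1$ or $r=2$ one adds the terminal vertex $v_n$, and for $r=3$ one adds $v_{n-1},v_n$; a direct check then shows the private-witness condition still holds and that $|S|=\lceil n/2\rceil$ in every residue class. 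The degree-one endpoints are what make these patches clean.

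The cycle (ii) uses the same ``two in, two out'' pattern read cyclically, and the real work is the lower bound when $n\equiv2\pmod4$. For constructions: when $n\equiv0\pmod4$ the periodic pattern (vertices $\equiv1,2\pmod4$ placed in $S$) closes up and gives $|S|=n/2$; for the other residues a single local modification of this pattern produces a super dominating set of the claimed size, and for odd $n$ the target value $(n+1)/2$ already equals the trivial bound $\lceil n/2\rceil$, so no more is needed there. The crux is to show that $n\equiv2\pmod4$ cannot reach $\lceil n/2\rceil=n/2$. Suppose it did, so $|\overline{S}|=|S|=n/2$ and $\phi$ is a bijection; then every $w\in S$ witnesses exactly one vertex and hence has exactly one neighbour in $\overline{S}$, and counting the edges between $S$ and $\overline{S}$ forces every vertex of the cycle to have exactly one neighbour in $S$ and one in $\overline{S}$. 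Labelling the vertices cyclically $x_0,\dots,x_{n-1}$ by membership, this says $x_{i-1}\ne x_{i+1}$ for all $i$, i.e.\ the labels must alternate along the even-indexed subcycle and along the odd-indexed subcycle. Each such subcycle has length $n/2$, and a proper alternation around a cycle exists only when its length is even; since $n\equiv2\pmod4$ makes $n/2$ odd, we reach a contradiction. Hence $|\overline{S}|\le n/2-1$ and $\gamma_{sp}(C_n)\ge n/2+1=\lceil\tfrac{n+1}{2}\rceil$, matching the construction.

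The step I expect to be the main obstacle is precisely this parity argument for $C_n$ with $n\equiv2\pmod4$: everything else reduces to the injection bound of Theorem \ref{thm-1} together with routine (if slightly fiddly) constructions, whereas here one must extract from ``$|\overline{S}|$ maximal'' the rigid local structure ``every vertex has one neighbour on each side'' and then recognize the obstruction as the non-bipartiteness of an odd subcycle. A secondary nuisance is bookkeeping the end and boundary patches in the path and cycle constructions so that $|S|$ hits $\lceil n/2\rceil$ (respectively the stated value) exactly in each residue class modulo $4$; I would organize this by residue rather than attempt a single uniform formula.
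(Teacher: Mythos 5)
The paper does not prove this statement at all: Theorem \ref{thm-2} is quoted from \cite{Lemans} as a known result, so there is no internal proof to compare yours against. Judged on its own, your argument is correct. The injection characterization (each $u\in\overline{S}$ has a private witness $\phi(u)\in S$, and $\phi$ is injective since $N(\phi(u))\cap\overline{S}=\{u\}$ pins $\phi(u)$ to $u$ alone) is sound, and it even subsumes domination, since $u\in N(\phi(u))$ is forced; this cleanly dispatches (iii)--(v), and your use of $\min\{n,m\}\ge 2$ in (iv) is exactly where it is needed. The path construction checks out in all four residue classes (for $r=3$ your ``patch'' is in fact just the unmodified periodic pattern, since $v_{n-1},v_n$ occupy positions $\equiv 2,3 \pmod 4$). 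The heart of the matter, as you correctly identify, is the lower bound for $C_n$ with $n\equiv 2\pmod 4$: your counting step is airtight --- bijectivity of $\phi$ gives each $w\in S$ exactly one neighbour in $\overline{S}$, the edge count between the sides then forces every vertex of the cycle to have exactly one neighbour on each side, which is the condition $x_{i-1}\neq x_{i+1}$, and alternation around the two subcycles of length $n/2$ is impossible when $n/2$ is odd. The only under-specified spots are the explicit cycle constructions for $n\not\equiv 0\pmod 4$, which you assert via ``a single local modification''; these are routine (e.g., take your path construction on $v_1,\dots,v_{n-1}$ and add $v_n$ to $S$, which cannot break any witness since enlarging $S$ only shrinks $\overline{S}$, and the sizes come out to $(n+1)/2$ for odd $n$ and $n/2+1$ for $n\equiv 2\pmod 4$), but a complete write-up should include them.
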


	\begin{theorem}\cite{Nima}\label{Firend-thm}
 For the friendship graph $F_n$, $\gamma_{sp}(F_n)=n+1$.
	\end{theorem}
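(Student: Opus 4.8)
The plan is to establish the two inequalities $\gamma_{sp}(F_n)\le n+1$ and $\gamma_{sp}(F_n)\ge n+1$ separately, and the pleasant feature of this problem is that the lower bound turns out to be essentially free. First I would fix notation: write $c$ for the central vertex shared by all $n$ triangles and denote the $i$-th triangle by $\{c,u_i,v_i\}$, so that $V(F_n)=\{c\}\cup\{u_1,v_1,\dots,u_n,v_n\}$, the edges being $cu_i$, $cv_i$ and $u_iv_i$ for $1\le i\le n$. In particular $F_n$ has order $2n+1$, and every vertex other than $c$ has degree $2$.

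For the upper bound I would exhibit an explicit super dominating set of cardinality $n+1$, namely $S=\{c\}\cup\{u_1,\dots,u_n\}$ with complement $\overline{S}=\{v_1,\dots,v_n\}$. The verification has two parts. The set $S$ is dominating because each $v_i$ is adjacent to $c\in S$. To check the super domination property, take any $v_i\in\overline{S}$ and consider its triangle partner $u_i\in S$: the only neighbours of $u_i$ are $c$ and $v_i$, and since $c\in S$ we get $N(u_i)\cap\overline{S}=\{v_i\}$, so $u_i$ is the required private witness for $v_i$. As this works for every index $i$, the set $S$ is super dominating and hence $\gamma_{sp}(F_n)\le n+1$.

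For the lower bound I would simply invoke Theorem~\ref{thm-1}. Since $F_n$ is not the empty graph, that result gives $\gamma_{sp}(F_n)\ge \tfrac{|V(F_n)|}{2}=\tfrac{2n+1}{2}=n+\tfrac12$; because $\gamma_{sp}(F_n)$ is an integer, this forces $\gamma_{sp}(F_n)\ge n+1$. Combining the two bounds yields the claimed equality.

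The step requiring the most care is really just the correct bookkeeping of the order of $F_n$: the whole lower bound hinges on the order being the \emph{odd} number $2n+1$, so that the generic bound $n/2$ of Theorem~\ref{thm-1} rounds strictly past $n$ rather than landing exactly on it. There is no genuine obstacle beyond this observation; one should nonetheless sanity-check the edge case $n=1$, where $F_1=K_3$ and both the construction above and Theorem~\ref{thm-2}(ii) give $\gamma_{sp}(F_1)=2=n+1$, confirming the formula at the base case.
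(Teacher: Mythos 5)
Your proof is correct. Note that this paper does not actually prove Theorem \ref{Firend-thm}: it is quoted without proof from the author's earlier work \cite{Nima}, so there is no in-paper argument to compare against. Your two-step argument is the natural one and is complete: the explicit set $S=\{c,u_1,\dots,u_n\}$ works for the upper bound exactly as you verify (each $u_i$ has $N(u_i)=\{c,v_i\}$ and $c\in S$, so it privately witnesses $v_i$), and the lower bound follows from Theorem \ref{thm-1} since $F_n$ has odd order $2n+1$, whence $\gamma_{sp}(F_n)\geq \frac{2n+1}{2}$ and integrality forces $\gamma_{sp}(F_n)\geq n+1$.
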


		\begin{theorem}\cite{Nima}\label{G/v}
 Let $G=(V,E)$ be a graph and $v\in V$ is not a pendant vertex.  Then,
 $$ \gamma_{sp}(G/v)\leq \gamma_{sp} (G)+\lfloor \frac{\deg (v)}{2} \rfloor -1,$$
 where $G/v$ is the graph obtained by deleting $v$ and putting a clique on the open neighbourhood of $v$.
	\end{theorem}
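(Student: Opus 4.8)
The plan is to start from a minimum super dominating set $S$ of $G$ and repair it into a super dominating set $S'$ of $G/v$ whose size exceeds $|S|$ by at most $\lfloor\deg(v)/2\rfloor-1$. Write $A=N(v)$ and $d=\deg(v)\ge 2$, and for each $u\in\overline S$ fix a private witness $\phi(u)\in S$ with $N(\phi(u))\cap\overline S=\{u\}$; since such a witness has no other neighbour in $\overline S$, the map $\phi$ is injective. The single observation that drives the whole argument is that passing from $G$ to $G/v$ only deletes $v$ (with its incident edges) and inserts the edges inside $A$. Hence a witness $\phi(u)$ lying outside $A\cup\{v\}$ has exactly the same neighbourhood in $G/v$ as in $G$, so it remains a valid private witness for $u$ as long as we only shrink $\overline S$. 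Consequently the only witness conditions that can be destroyed are those of witnesses located in $A$.

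First I would dispose of the case $v\in\overline S$. Every $w\in A\cap S$ is adjacent to $v\in\overline S$, so if $w$ is a witness at all it must witness $v$; by injectivity of $\phi$ the only witness lying in $A$ is then $\phi(v)$. Thus each $u\in\overline S\setminus\{v\}$ has its witness outside $A$, and the observation above shows that $S'=S$ (which avoids $v$) is already super dominating in $G/v$. Since $d\ge2$ forces $\lfloor d/2\rfloor-1\ge0$, the inequality holds with room to spare.

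The substantive case is $v\in S$. I would set $S_0=S\setminus\{v\}$ and note that $v$ can be the $\phi$-image of at most one vertex, and only when $N(v)\cap\overline S$ is a single vertex; so whenever $|A_{out}|\ge2$ (with $A_{out}=A\cap\overline S$) the deletion of $v$ destroys no witness. Let $W_A$ be the set of witnesses lying in $A$, a subset of $A\cap S$ disjoint from $A_{out}$. I prepare two repairs, both leaving the external witnesses untouched and only adjusting membership on $A$ and its witness targets. The first repair adds all of $A_{out}$ to $S_0$, making $A\cap\overline{S'}$ empty, so that after the clique is inserted no witness in $A$ acquires a second neighbour in $\overline{S'}$; its cost is $|A_{out}|$. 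The second repair adds to $S_0$ only the targets of the witnesses in $W_A$ (those inside $A$ and those outside $A$), leaving every other vertex of $A_{out}$ outside, where it keeps the witness it already had outside $A$; because $\phi$ is injective this costs exactly $|W_A|\le|A\cap S|=d-|A_{out}|$.

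The choice is governed by the size of $A_{out}$: if $|A_{out}|\le\lfloor d/2\rfloor$ I use the first repair, and otherwise the second — in the latter regime $|A_{out}|\ge2$, so, as noted, deleting $v$ is free. Either way the cost is at most $\lfloor d/2\rfloor$, since $d-|A_{out}|\le\lceil d/2\rceil-1\le\lfloor d/2\rfloor$ when $|A_{out}|>\lfloor d/2\rfloor$; together with the vertex saved by deleting $v$ this gives $|S'|\le(|S|-1)+\lfloor d/2\rfloor=\gamma_{sp}(G)+\lfloor d/2\rfloor-1$. I expect the genuine work to be the verification that each repaired $S'$ is super dominating: one checks vertex by vertex that a private neighbour survives in $G/v$, the delicate points being that a witness $w\in A$ whose target has been moved into $S'$ becomes a harmless non-witness, and that no vertex left in $\overline{S'}$ was relying on a witness inside $A$ that the new clique edges have spoiled. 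The counting itself is short — it rests only on $\phi$ being injective on the two disjoint subsets $A_{out}$ and $W_A$ of $A$ — and it is precisely this dichotomy $\min\{|A_{out}|,\,d-|A_{out}|\}\le\lfloor d/2\rfloor$ that produces the $\lfloor\deg(v)/2\rfloor$ term.
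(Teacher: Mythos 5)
Your proof is correct, but note that this paper never actually proves Theorem \ref{G/v}: it is quoted without proof from \cite{Nima}, so the only in-paper argument to compare against is the proof of the companion result, Theorem \ref{Godotv}, for $G\odot v$. That proof follows the same general template as yours (start from a minimum super dominating set, observe that witnesses outside $N(v)\cup\{v\}$ are undisturbed, repair the damage inside $N(v)$), but it needs only one repair: deleting the edges inside $N(v)$ can only break witness--target pairs lying entirely inside $N(v)$, of which there are at most $\lfloor \deg(v)/2\rfloor$, and the $-1$ is recovered because $v$, which stays in the set, ends up witnessing one leftover neighbour. Your setting is harder in exactly the way you identify: inserting a clique on $A=N(v)$ can also spoil a witness $w\in A$ whose private neighbour lies outside $A$, since the vertices of $A_{out}=A\cap\overline{S}$ become new neighbours of $w$. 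Your two-repair dichotomy (absorb all of $A_{out}$ at cost $|A_{out}|$, or buy out the targets of all witnesses in $A$ at cost $|W_A|\le d-|A_{out}|$) combined with $\min\{|A_{out}|,\,d-|A_{out}|\}\le\lfloor d/2\rfloor$ handles precisely this difficulty, and the $-1$ now comes from the forced removal of $v$ itself rather than from $v$ serving as a witness. The verifications you defer do all go through: $\phi$ is injective; $v$ can only witness a vertex of $A$, and only when $|A_{out}|=1$, so deleting $v$ is harmless both when $A_{out}$ is absorbed (first repair) and in the regime $|A_{out}|\ge 2$ where the second repair is invoked; and in either repair every vertex left outside retains a witness that is either untouched (outside $A\cup\{v\}$) or whose new clique-neighbours all lie inside the repaired set. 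So this is a complete and correct proof, by a route genuinely adapted to the clique operation rather than a transcription of the $G\odot v$ argument.
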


Here we consider to $G\odot v$. First suppose that $v$ is a pendant vertex. Then by the definition of $G\odot v$, we have $G\odot v=G$. So we have the following easy result:

	\begin{proposition}\label{Godotvpendant}
  Let $G=(V,E)$ be a graph and $v\in V$ is a pendant vertex. Then,
 $$ \gamma_{sp}(G\odot v)= \gamma_{sp} (G).$$
	\end{proposition}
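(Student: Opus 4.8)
The plan is to prove the stronger statement that $G\odot v$ and $G$ are literally the same graph, from which the equality of super domination numbers follows at once. The approach rests on a single observation about the definition of the operation: $G\odot v$ is obtained by deleting every edge joining two neighbours of $v$.

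First I would unpack what it means for $v$ to be a pendant vertex, namely $\deg(v)=1$, so that the open neighbourhood $N(v)$ contains exactly one vertex. The next step is to note that with $|N(v)|=1$ there is no pair of distinct neighbours of $v$ at all; consequently there is no edge of $G$ lying between two neighbours of $v$, and the prescribed edge deletion is vacuous. This yields $G\odot v=G$ as an identity of graphs, with the same vertex set and the same edge set.

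Finally, since the super domination number $\gamma_{sp}$ is determined entirely by the adjacency relation of the graph, two graphs that coincide must have identical families of super dominating sets, and therefore equal minimum cardinalities, giving $\gamma_{sp}(G\odot v)=\gamma_{sp}(G)$. I do not expect any genuine obstacle here: the content is essentially definitional, and the only point deserving explicit mention is that a degree-one vertex simply offers no pair of neighbours whose connecting edge could be removed, so the operation changes nothing.
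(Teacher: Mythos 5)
Your proposal is correct and matches the paper's reasoning exactly: the paper also observes that for a pendant vertex the edge-removal in the definition of $G\odot v$ is vacuous, so $G\odot v=G$ and the equality of super domination numbers is immediate. Your write-up merely makes explicit the (trivial) reason why no edge is removed, which the paper leaves implicit.
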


Hence, there is no reason to compute $\gamma_{sp}(G\odot v)$ when $v$ is a pendant vertex.	Now we find a sharp upper bound for the super domination number of $G\odot v$ when $v$ is not a pendant vertex.

	\begin{theorem}\label{Godotv}
 Let $G=(V,E)$ be a graph and $v\in V$ is not a pendant vertex.  Then,
 $$\gamma_{sp}(G\odot v)\leq \gamma_{sp} (G)+\lfloor \frac{\deg (v)}{2} \rfloor -1.$$
	\end{theorem}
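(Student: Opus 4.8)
The plan is to start from an optimal super dominating set $S$ of $G$, so $|S|=\gamma_{sp}(G)$, together with a choice of private witnesses: for each $u\in\overline{S}$ fix a vertex $\psi(u)\in S$ with $N(\psi(u))\cap\overline{S}=\{u\}$. The crucial structural remark is that passing from $G$ to $G\odot v$ only deletes edges both of whose endpoints lie in $N(v)$; hence the neighbourhood of every vertex outside $N(v)$ is unchanged, and every edge incident to $v$ survives. Consequently $\psi(u)$ is still a witness of $u$ in $G\odot v$ unless the edge $u\,\psi(u)$ is internal to $N(v)$, i.e. unless both $u$ and $\psi(u)$ belong to $N(v)$. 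Call $u$ \emph{broken} if every witness fails in this way; then every broken vertex lies in $A:=N(v)\cap\overline{S}$ and has all its witnesses in $B:=N(v)\cap S$. Since a witness determines its unique private neighbour, the assignment $u\mapsto\psi(u)$ embeds the broken set injectively into $B$, so the number $t$ of broken vertices satisfies $t\le\min(|A|,|B|)\le\lfloor\deg(v)/2\rfloor$.

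The repair strategy is to enlarge $S$ by broken vertices: once a broken $u$ is placed in the set it no longer needs a witness, while every \emph{safe} vertex retains its surviving witness because adding vertices only shrinks $\overline{S}$. The naive bound this yields is $\gamma_{sp}(G)+t\le\gamma_{sp}(G)+\lfloor\deg(v)/2\rfloor$, so the whole difficulty is to save one vertex and reach $\lfloor\deg(v)/2\rfloor-1$. First I would dispose of the case $v\notin S$: then $v\in\overline{S}$, so every $b\in B$ is adjacent to $v\in\overline{S}$ and thus has $v$ in its private neighbourhood; a witness of a broken vertex $u\ne v$ cannot then have $\{u\}$ as its private neighbourhood. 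Hence there are \emph{no} broken vertices when $v\notin S$, and $S'=S$ already works (using $\deg(v)\ge 2$).

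It remains to save one vertex when $v\in S$ and $t=\lfloor\deg(v)/2\rfloor$ (if $t$ is smaller, $S$ together with the broken set already meets the bound). Here I would prove a tightness dichotomy from $t\le\min(|A|,|B|)$ and $|A|+|B|=\deg(v)$: either every external neighbour of $v$ is broken ($A$ equals the broken set), or every vertex of $B$ is the witness of a broken vertex ($B=\psi(\text{broken set})$). In the first case I add all broken vertices except one, say $u^\ast$; since $v\in S$ and all other neighbours of $v$ now lie in the set, $N(v)\cap\overline{S'}=\{u^\ast\}$ in $G\odot v$, so $v$ itself witnesses $u^\ast$. In the second case I instead set $S'=(S\setminus\{v\})\cup(\text{broken set})$: each $b\in B$ had its unique private neighbour inside $N(v)$, which is destroyed by $\odot v$, leaving $b$ adjacent in $\overline{S'}$ only to $v$, so $b$ witnesses the now-uncovered $v$. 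In both cases $|S'|=\gamma_{sp}(G)+t-1$.

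The step I expect to be the main obstacle is the verification in this last (tight) case that the modified set is genuinely super dominating, i.e. that no surviving witness acquires a \emph{second} private neighbour. The delicate points are that when $v$ is removed it must not orphan any vertex it alone witnessed, and that when $v\in\overline{S'}$ it must not appear as an extra private neighbour of some $b\in B$; both are controlled by observing that in the relevant sub-case $|A|\ge 2$ (so $v$ witnesses nobody) and that, because a witness fixes its private neighbour uniquely, no safe vertex can share a witness in $B$ with a broken vertex. Establishing the tightness dichotomy cleanly and threading these uniqueness arguments is where the real work lies; the counting and the construction itself are routine once they are in place.
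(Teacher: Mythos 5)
Your proposal is correct, and in its first two stages it retraces the paper's own proof: the paper likewise fixes an optimal super dominating set $S$, disposes of the case $v\notin S$ by noting that a vertex of $N(v)\cap S$ is adjacent to $v\in\overline{S}$ and so can never privately dominate another vertex of $N(v)\cap\overline{S}$, and otherwise repairs $S$ by adding the ``broken'' vertices (those all of whose witnesses lie inside $N(v)$), of which there are at most $\lfloor\deg(v)/2\rfloor$ by injectivity of the witness map. The genuine divergence is in the tight case $v\in S$, $t=\lfloor\deg(v)/2\rfloor$. The paper saves the final vertex only by your mechanism (a): add all broken vertices except one, $u^\ast$, and let $v$ witness $u^\ast$, justified by the claim that all of $N(v)\setminus\{u^\ast\}$ then lies in the enlarged set. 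That claim tacitly assumes every vertex of $A=N(v)\cap\overline{S}$ is broken, which your counting shows is automatic when $\deg(v)$ is even, but can fail when $\deg(v)=2k+1$: there $|A|=k+1$, $|B|=k$ is possible with one safe vertex left in $A$. Concretely, take $V=\{v,a_1,a_2,b,w\}$ with edges $va_1,va_2,vb,ba_1,wa_2$ and $S=\{v,b,w\}$; this is the paper's case (iii), its repaired set is $S$ itself, and $S$ is not super dominating in $G\odot v$ (there $a_1$ is adjacent only to $v$, while $N(v)\cap\overline{S}=\{a_1,a_2\}$, so $a_1$ has no witness), whereas your case (b) set $(S\setminus\{v\})\cup\{a_1\}=\{a_1,b,w\}$ works. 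Your second mechanism --- remove $v$ from the set, insert the broken vertices, and let any $b\in B$ witness $v$, which is legitimate because in that subcase $|A|\ge 2$ (so $v$ was nobody's witness) and each $b\in B$ loses its unique private neighbour to the edge deletion, while no safe vertex can have a witness in $B\cup\{v\}$ --- is exactly what the missing situation requires, and the uniqueness observations you flag as the real work do carry the verification through. So your argument is not merely a restyled version of the paper's: the tightness dichotomy together with case (b) closes a genuine gap in the paper's case (iii), at the cost of a longer counting argument; the paper's shorter construction is valid only under the unstated hypothesis that all of $N(v)\cap\overline{S}$ is privately dominated from inside $N(v)$.
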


	\begin{proof}
Suppose that $v\in V$ such that  $\deg (v)=n\geq2$ and $N(v)=\{v_1,v_2,\ldots,v_n\}$.  Also $D$ is a super dominating set for $G$. We have the following cases:
\begin{itemize}
\item[(i)]
$v\notin D$. So, there exists $v_r\in N(v)\cap D$ such that $N(v_r)\cap \overline{D} = \{v\}$ which means that all other neighbours of $v_r$ are in $D$ too. There is no vertex such as $v_p\in N(v)$  that dominates $v_q\in N(v)\cap \overline{D}$ and satisfies the condition of super dominating set, because in that case we have  $\{v_q,v\}\subseteq N(v_p)\cap \overline{D} $ which is a contradiction. So all vertices in $ N(v)\cap \overline{D}$ dominate by some vertices which are not in $N(v)$.  Now by removing all edges between
 any pair of neighbours of $v$, $D$ is a super dominating set for the $G\odot v$ too. So, $\gamma_{sp}(G\odot v)\leq \gamma_{sp} (G)$.
\item[(ii)]
$v\in D$ and for some $1 \leq i \leq n$, there exists $v_i\in N(v)$ such that  $N(v)\cap \overline{D} = \{v_i\}$. So, all other neighbours of $v$ should be in $D$. Now by removing all edges between
 any pair of neighbours of $v$, $D$ is a super dominating set for the $G\odot v$ too. So, $\gamma_{sp}(G\odot v)\leq \gamma_{sp} (G)$.
\item[(iii)]
$v\in D$ and for every $1 \leq i \leq n$, there does not exist $v_i\in N(v)$ such that  $N(v)\cap \overline{D} = \{v_i\}$. If $v_i\in N(v)$ is dominated by $v_i'$ such that $v_i'\notin N(v)$, then after removing all edges between
 any pair of neighbours of $v$, $v_i$ still can dominated by $v_i'$ and $N(v_i')\cap \overline{D} = \{v_i\}$. So we keep all vertices in $D-N(v)$ in our dominating set. If $v_i\in N(v)$ is dominated by $v_j$ such that $v_j\in N(v)$ and $N(v_j)\cap \overline{D} = \{v_i\}$, then we simply add $v_i$ to our dominating set after removing all edges between
 any pair of neighbours of $v$. At most we have $\lfloor \frac{n}{2} \rfloor$ vertices with this condition. Without loss of generality, suppose that $v_1$ dominates $v_2$, $v_3$ dominates $v_4$, $v_5$ dominates $v_6$ and so on. Since all vertices in $N(v)-\{v_2\}$ are in $D\cup\{v_4,v_6,\ldots\}$, then $v_2$ is now dominated by $v$, and   then by our argument, $D\cup\{v_4,v_6,\ldots\}$ is a super dominating set for $G\odot v$. Hence, $\gamma_{sp}(G\odot v)\leq \gamma_{sp} (G)+\lfloor \frac{n}{2} \rfloor -1$. 
\end{itemize}
  Therefore we have the result.
\qed
	\end{proof}

	\begin{remark}
Upper bound in Theorem \ref{Godotv} is sharp. It suffices to consider $G=F_n$ as friendship graph and $v$ the vertex with $\deg(v)=2n$. By Theorem \ref{Firend-thm}, $\gamma_{sp} (G)=n+1$. One can easily check that $G\odot v=K_{1,2n}$ and then by Theorem \ref{thm-2}, $\gamma_{sp}(G\odot v)=2n$. Therefore $\gamma_{sp}(G\odot v)= \gamma_{sp} (G)+\lfloor \frac{\deg (v)}{2} \rfloor -1$.
	\end{remark}

We end this section by an immediate result of Theorems \ref{G/v} and \ref{Godotv}.

	\begin{corollary}
Let $G=(V,E)$ be a graph and $v\in V$ is not a pendant vertex. Then,
$$ \gamma _{sp}(G) \geq \frac{\gamma _{sp}(G\odot v)+\gamma _{sp}(G/v)}{2}-\lfloor \frac{\deg (v)}{2} \rfloor +1.$$
	\end{corollary}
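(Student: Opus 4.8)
The plan is to observe that this corollary is a purely formal consequence of the two upper bounds already established, so no new combinatorial argument about super dominating sets is needed. The two ingredients are Theorem~\ref{G/v}, which gives
$$\gamma_{sp}(G/v)\leq \gamma_{sp}(G)+\left\lfloor \frac{\deg(v)}{2}\right\rfloor -1,$$
and Theorem~\ref{Godotv}, which gives the identical-looking bound
$$\gamma_{sp}(G\odot v)\leq \gamma_{sp}(G)+\left\lfloor \frac{\deg(v)}{2}\right\rfloor -1.$$
Both require only that $v$ is not a pendant vertex, which is exactly the hypothesis of the corollary, so both are applicable simultaneously.

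The first step I would take is to add these two inequalities term by term. Since the right-hand sides are identical, the sum is
$$\gamma_{sp}(G/v)+\gamma_{sp}(G\odot v)\leq 2\gamma_{sp}(G)+2\left\lfloor \frac{\deg(v)}{2}\right\rfloor -2.$$
The second step is to divide through by $2$, giving
$$\frac{\gamma_{sp}(G/v)+\gamma_{sp}(G\odot v)}{2}\leq \gamma_{sp}(G)+\left\lfloor \frac{\deg(v)}{2}\right\rfloor -1,$$
and the final step is to isolate $\gamma_{sp}(G)$ by moving the floor term and the constant to the left, which yields exactly the claimed inequality.

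There is essentially no obstacle here: the only point requiring any care is making sure that the hypothesis ``$v$ is not a pendant vertex'' is carried over so that both theorems genuinely apply, and that the arithmetic of combining two copies of the same floor term is done correctly (the factor $2$ on the floor term cancels cleanly upon dividing by $2$, leaving a single $\lfloor \deg(v)/2\rfloor$). Thus the proof is just the chain of three elementary manipulations above, and I would present it in two or three lines.
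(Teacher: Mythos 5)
Your proof is correct and is exactly what the paper intends: the corollary is stated as an immediate consequence of Theorems \ref{G/v} and \ref{Godotv}, obtained by adding the two identical upper bounds, dividing by $2$, and rearranging. Nothing is missing, and the hypothesis that $v$ is not a pendant vertex is indeed the only condition needed for both theorems to apply.
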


\section{Super domination number of chain of graphs}

In this section, we consider to a special case of graphs which is obtained by point-attaching from primary subgraphs, and  is called chain of graphs $G_1,\ldots , G_n$. Suppose that $x_i,y_i \in V(G_i)$. Let $C(G_1,...,G_n)$ be the chain of graphs $\{G_i\}_{i=1}^n$ with respect to the vertices $\{x_i, y_i\}_{i=1}^k$ which is obtained by identifying the vertex $y_i$ with the vertex $x_{i+1}$ for $i=1,2,\ldots,n-1$ (see Figure \ref{chain-n}).

	\begin{figure}[!h]
		\begin{center}
			\psscalebox{0.75 0.75}
			{
				\begin{pspicture}(0,-3.9483333)(12.236668,-2.8316667)
				\psellipse[linecolor=black, linewidth=0.04, dimen=outer](1.2533334,-3.4416668)(1.0,0.4)
				\psellipse[linecolor=black, linewidth=0.04, dimen=outer](3.2533333,-3.4416668)(1.0,0.4)
				\psellipse[linecolor=black, linewidth=0.04, dimen=outer](5.2533336,-3.4416668)(1.0,0.4)
				\psellipse[linecolor=black, linewidth=0.04, dimen=outer](8.853333,-3.4416668)(1.0,0.4)
				\psellipse[linecolor=black, linewidth=0.04, dimen=outer](10.853333,-3.4416668)(1.0,0.4)
				\psdots[linecolor=black, fillstyle=solid, dotstyle=o, dotsize=0.3, fillcolor=white](2.2533333,-3.4416666)
				\psdots[linecolor=black, fillstyle=solid, dotstyle=o, dotsize=0.3, fillcolor=white](0.25333345,-3.4416666)
				\psdots[linecolor=black, fillstyle=solid, dotstyle=o, dotsize=0.3, fillcolor=white](2.2533333,-3.4416666)
				\psdots[linecolor=black, fillstyle=solid, dotstyle=o, dotsize=0.3, fillcolor=white](4.2533336,-3.4416666)
				\psdots[linecolor=black, fillstyle=solid, dotstyle=o, dotsize=0.3, fillcolor=white](4.2533336,-3.4416666)
				\psdots[linecolor=black, fillstyle=solid, dotstyle=o, dotsize=0.3, fillcolor=white](9.853333,-3.4416666)
				\psdots[linecolor=black, fillstyle=solid, dotstyle=o, dotsize=0.3, fillcolor=white](9.853333,-3.4416666)
				\psdots[linecolor=black, fillstyle=solid, dotstyle=o, dotsize=0.3, fillcolor=white](11.853333,-3.4416666)
				\rput[bl](0.0,-3.135){$x_1$}
				\rput[bl](2.0400002,-3.2016668){$x_2$}
				\rput[bl](3.9866667,-3.1216667){$x_3$}
				\rput[bl](2.1733334,-3.9483335){$y_1$}
				\rput[bl](4.12,-3.9483335){$y_2$}
				\rput[bl](6.1733336,-3.8816667){$y_3$}
				\rput[bl](0.9600001,-3.6283333){$G_1$}
				\rput[bl](3.0,-3.5883334){$G_2$}
				\rput[bl](5.04,-3.5616667){$G_3$}
				\psdots[linecolor=black, fillstyle=solid, dotstyle=o, dotsize=0.3, fillcolor=white](6.2533336,-3.4416666)
				\psdots[linecolor=black, fillstyle=solid, dotstyle=o, dotsize=0.3, fillcolor=white](7.8533335,-3.4416666)
				\psdots[linecolor=black, dotsize=0.1](6.6533337,-3.4416666)
				\psdots[linecolor=black, dotsize=0.1](7.0533333,-3.4416666)
				\psdots[linecolor=black, dotsize=0.1](7.4533334,-3.4416666)
				\rput[bl](9.6,-3.0816667){$x_n$}
				\rput[bl](11.826667,-3.8683333){$y_n$}
				\rput[bl](9.586667,-3.9483335){$y_{n-1}$}
				\rput[bl](8.533334,-3.6016667){$G_{n-1}$}
				\rput[bl](7.4,-3.1616666){$x_{n-1}$}
				\rput[bl](10.613334,-3.575){$G_n$}
				\end{pspicture}
			}
		\end{center}
		\caption{Chain of $n$ graphs $G_1,G_2, \ldots , G_n$.} \label{chain-n}
	\end{figure}
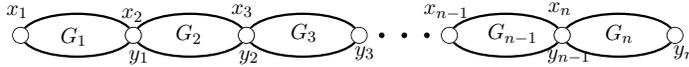

Before we start the study of super domination number of chain of graphs,  we mention the  following easy result which is a direct result of the definition of super dominating set and super domination number:

\begin{proposition}\label{pro-disconnect}
	Let $G$ be a disconnected graph with components $G_1$ and $G_2$. Then
		$$\gamma _{sp}(G)=\gamma _{sp}(G_1)+\gamma _{sp}(G_2).$$
\end{proposition}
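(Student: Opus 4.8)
The plan is to establish the two inequalities $\gamma_{sp}(G) \leq \gamma_{sp}(G_1)+\gamma_{sp}(G_2)$ and $\gamma_{sp}(G)\geq \gamma_{sp}(G_1)+\gamma_{sp}(G_2)$ separately. The single fact that drives everything is that, since $G_1$ and $G_2$ are the connected components of $G$, there are no edges between them. Consequently, for any vertex $v\in V(G_i)$ we have $N_G(v)=N_{G_i}(v)$, and any set $S\subseteq V(G)$ splits as $S=S_1\cup S_2$ with $S_i=S\cap V(G_i)$, while $\overline{S}\cap V(G_i)$ is precisely the complement $V(G_i)\setminus S_i$ of $S_i$ taken inside $G_i$. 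I would isolate this observation first and then apply it in both directions.

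For the upper bound I would take minimum super dominating sets $S_1$ of $G_1$ and $S_2$ of $G_2$ and show that $S=S_1\cup S_2$ is a super dominating set of $G$. That $S$ is dominating is immediate. For the super condition, given $u\in \overline{S}$ with, say, $u\in V(G_1)$, I use that $S_1$ is super dominating in $G_1$ to obtain a witness $v\in S_1$ with $N_{G_1}(v)\cap (V(G_1)\setminus S_1)=\{u\}$; because neighbourhoods do not cross components, $N_G(v)\cap \overline{S}=N_{G_1}(v)\cap (V(G_1)\setminus S_1)=\{u\}$, so the same $v$ witnesses $u$ in $G$. This yields $\gamma_{sp}(G)\leq |S_1|+|S_2|=\gamma_{sp}(G_1)+\gamma_{sp}(G_2)$.

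For the lower bound I would start with a minimum super dominating set $S$ of $G$, set $S_i=S\cap V(G_i)$, and argue that each $S_i$ is super dominating in $G_i$. The one point requiring a moment's care is placing the witness in the correct component: given $u\in V(G_1)\setminus S_1$, the super condition in $G$ provides $v\in S$ with $N_G(v)\cap\overline{S}=\{u\}$; since $u\in N_G(v)$ and $u\in V(G_1)$, the absence of edges between components forces $v\in V(G_1)$, hence $v\in S_1$, and restricting back gives $N_{G_1}(v)\cap (V(G_1)\setminus S_1)=\{u\}$. Thus $|S_i|\geq \gamma_{sp}(G_i)$, and summing over $i\in\{1,2\}$ gives $\gamma_{sp}(G)=|S_1|+|S_2|\geq \gamma_{sp}(G_1)+\gamma_{sp}(G_2)$.

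Since nothing beyond the component structure is used, the argument is genuinely routine, which matches the paper's description of it as a direct consequence of the definition. The only step that is not a one-line observation is the ``the witness lies in the right component'' point in the lower bound, and even that is immediate once one notes that the witnessing vertex $v$ must be adjacent to $u$. Combining the two inequalities yields the claimed equality.
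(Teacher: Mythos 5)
Your proof is correct, and it is precisely the routine argument the paper has in mind: the paper states this proposition without proof, calling it ``a direct result of the definition,'' and your write-up simply fills in those intended details. Both directions are sound, including the one point genuinely worth recording --- that in the lower bound the witness $v$ for $u\in V(G_1)\setminus S_1$ must lie in $G_1$ because $v$ is adjacent to $u$ and no edges cross components (which also shows each $S_i$ is dominating in $G_i$, so the super condition restricts correctly).
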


Now we consider to chain of two graphs and find sharp upper and lower bounds for its super domination number.

	\begin{theorem}\label{chain2-thm}
	Let $G_1$ and $G_2$ be two  disjoint connected graphs and let
		$x_i,y_i \in V(G_i)$ for $i\in \{1,2\}$. Let $C(G_1,G_2)$ be the chain of graphs $\{G_i\}_{i=1}^2$ with respect to the vertices $\{x_i, y_i\}_{i=1}^2$ which obtained by identifying the vertex $y_1$ with the vertex $x_{2}$. Let this vertex in $V(C(G_1,G_2))$ be $z$ (see Figure \ref{chain-2}). Then,
$$\gamma _{sp}(G_1)+\gamma _{sp}(G_2) -1\leq   \gamma _{sp}(C(G_1,G_2)) \leq \gamma _{sp}(G_1)+\gamma _{sp}(G_2).$$
	\end{theorem}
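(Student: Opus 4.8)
The plan is to prove the two inequalities separately, in both directions exploiting the single structural fact that the neighbourhood of the identification vertex $z=y_1=x_2$ is the disjoint union $N_{G_1}(y_1)\cup N_{G_2}(x_2)$, whereas every other vertex keeps exactly the neighbourhood it had inside its own $G_i$. Consequently, for any set $S$ and any vertex $v\neq z$ the external neighbourhood $N(v)\cap\overline{S}$ computed in $C(G_1,G_2)$ coincides (up to the possible presence of $z$) with the one computed in the $G_i$ containing $v$, so the super-domination condition transfers verbatim for all vertices except $z$. The whole difficulty is therefore concentrated at $z$, whose private neighbourhood straddles both graphs.

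For the \emph{lower bound}, I would start from a minimum super dominating set $S$ of $C(G_1,G_2)$ and set $S_i^{*}=S\cap V(G_i)$. Using the observation above one checks that each $S_i^{*}$ already super dominates $G_i$ whenever $z\in S$: a chain-witness of a vertex $u\in V(G_i)\setminus\{z\}$ must itself lie in $V(G_i)$, since a vertex of $G_{3-i}\setminus\{z\}$ has no neighbour in $V(G_i)$, and if that witness happens to be $z$ then its external neighbourhood being $\{u\}$ forces $N_{G_2}(x_2)\cap\overline{S}=\emptyset$, so $z$ behaves correctly inside $G_i$ as well. If $z\in S$ then $|S_1^{*}|+|S_2^{*}|=|S|+1$ and we are done. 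If $z\notin S$, let $v_0$ be the chain-witness of $z$; it lies in exactly one $V(G_i)$, say $V(G_1)$, and then $v_0$ both dominates and privately witnesses $z$ inside $G_1$, so $S_1^{*}$ super dominates $G_1$, while on the other side we repair $G_2$ by passing to $S_2^{*}\cup\{z\}$. This adds a single vertex, giving $\gamma_{sp}(G_1)+\gamma_{sp}(G_2)\le|S|+1$, which is the claimed lower bound.

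For the \emph{upper bound}, I would take minimum super dominating sets $S_1,S_2$ of $G_1,G_2$ and analyse $S=S_1\cup S_2$ according to whether $z$ lies in $S_1$ and in $S_2$. When $z$ is in neither set, or when $z$ belongs to one or both sets but its cross-neighbourhood on the other side is empty, the union is at once super dominating and has size at most $\gamma_{sp}(G_1)+\gamma_{sp}(G_2)$. The genuine trouble occurs when $z\in S_i$ is the \emph{unique} private witness inside $G_i$ of some neighbour $u^{*}$ while $z$ simultaneously has a neighbour outside $S$ on the other side: then $N(z)\cap\overline{S}$ has at least two elements in the chain, so $z$ witnesses nobody and $u^{*}$, whose only potential witness was $z$, is orphaned. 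If $z\in S_1\cap S_2$ there is one unit of slack, because $z$ is counted once, and it suffices to add the single orphaned vertex. The hard subcase is $z\in S_1\setminus S_2$ (and symmetrically), where the union already has size $|S_1|+|S_2|$ and leaves no room; here I would repair by the exchange $S'=(S\cup\{u^{*}\})\setminus\{w_2\}$, where $w_2\in S_2$ is the $G_2$-private witness of $x_2$.

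The main obstacle is verifying that this exchange set $S'$ is again super dominating while its size stays equal to $\gamma_{sp}(G_1)+\gamma_{sp}(G_2)$. Adding $u^{*}$ repairs $G_1$ and never harms existing witnesses, since a newly covered vertex cannot have been the private target of any other witness, so the delicate point is that the re-exposed vertex $w_2$ regains both a dominator and a private witness. Because $N_{G_2}(w_2)\cap\overline{S_2}=\{x_2\}$ and $x_2=z$ is kept in $S'$, every neighbour of $w_2$ in $G_2$ lies in $S'$, which settles domination; recovering a private witness for $w_2$ must be argued from the witnessing structure of $S_2$ around $x_2$, and this is where I expect the case analysis to be most technical. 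Finally, I would record the explicit instance $G_1=P_3$ and $G_2=C_5$ with the prescribed identification to confirm that the naive union $S_1\cup S_2$ can genuinely fail to be super dominating, which is precisely what forces the exchange step rather than a pure augmentation.
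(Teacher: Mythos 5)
Your lower-bound argument is correct and is essentially the paper's own: restricting a minimum super dominating set of $C(G_1,G_2)$ to the two sides and re-inserting $z$ on whichever side needs it costs exactly one vertex, giving $\gamma_{sp}(G_1)+\gamma_{sp}(G_2)\le \gamma_{sp}(C(G_1,G_2))+1$. The upper bound is where there is a genuine gap. Your easy cases are fine, and you correctly isolate the hard case: $z\in S_1\setminus S_2$, $z$ is the unique witness of some $u^{*}\in \overline{S_1}$, and $N_{G_2}(x_2)\cap\left(\overline{S_2}\setminus\{x_2\}\right)\neq\emptyset$. But your repair $S'=\left(S\cup\{u^{*}\}\right)\setminus\{w_2\}$ fails for two reasons, only the first of which you acknowledge (and explicitly defer): (a) $w_2$ need not regain a private witness; and (b) — never addressed in your proposal — moving $w_2$ into the complement can destroy \emph{other} witnesses: any $v'\in S_2$ adjacent to $w_2$ that was the witness of some $u'\in\overline{S_2}$ now satisfies $\{u',w_2\}\subseteq N(v')\cap\overline{S'}$, and $u'$ may have no alternative witness.

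Both failures occur simultaneously in a small instance of your own hard case. Let $G_1=P_3$ with vertices $p,y_1,q$ and centre $y_1$, and $S_1=\{p,y_1\}$, so $y_1$ is the only possible witness of $u^{*}=q$ (indeed $N(q)=\{y_1\}$). Let $G_2=C_4$ with vertices $a,b,c,d$ in cyclic order, $x_2=a$, and $S_2=\{b,c\}$, so $x_2\notin S_2$, $w_2=b$, and $d\in N_{G_2}(a)\cap\overline{S_2}$ makes the cross-neighbourhood dirty; the naive union $\{p,z,b,c\}$ indeed orphans $q$. Your exchange gives $S'=\{p,z,q,c\}$ with $\overline{S'}=\{b,d\}$, and one computes $N(z)\cap\overline{S'}=N(c)\cap\overline{S'}=\{b,d\}$, so \emph{neither} $b$ nor $d$ has a witness: $S'$ is not a super dominating set. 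The bound itself survives, since $\{p,q,b,c\}$ works (witnesses: $N(p)\cap\overline{S}=\{z\}$ and $N(c)\cap\overline{S}=\{d\}$) — and this is exactly what the paper's construction produces: in this case the paper trades $z$ itself for $u^{*}$, taking $\left(S_1\cup S_2\cup\{g_1\}\right)\setminus\{y_1\}$ and using the $G_2$-witness $h$ of $x_2$ (here $h=b$) to privately dominate the newly exposed $z$. So the repair must happen at $z$, not at $w_2$. (A caution: even the paper's swap silently assumes that no witness $v'\in S_1$ adjacent to $y_1$ is spoiled by the exposed $z$ — e.g.\ $G_1=P_4$ with $S_1$ containing $y_1$ and a witness next to it shows the paper's set can also fail, though there a different set of the right size exists — but that is a repairable oversight, whereas your exchange fails outright on the instance above.)
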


	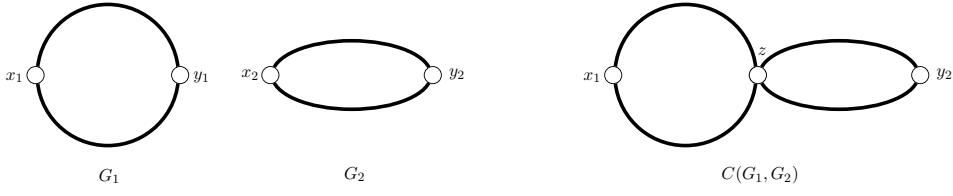
\begin{figure}
		\begin{center}
			\psscalebox{0.6 0.6}
{
\begin{pspicture}(0,-4.8)(20.99,-0.8)
\pscircle[linecolor=black, linewidth=0.08, dimen=outer](2.26,-2.4){1.6}
\psellipse[linecolor=black, linewidth=0.08, dimen=outer](7.66,-2.4)(1.8,0.8)
\psdots[linecolor=black, dotstyle=o, dotsize=0.4, fillcolor=white](0.66,-2.4)
\psdots[linecolor=black, dotstyle=o, dotsize=0.4, fillcolor=white](3.86,-2.4)
\psdots[linecolor=black, dotstyle=o, dotsize=0.4, fillcolor=white](5.86,-2.4)
\psdots[linecolor=black, dotstyle=o, dotsize=0.4, fillcolor=white](9.46,-2.4)
\rput[bl](0.0,-2.56){$x_1$}
\rput[bl](5.22,-2.5){$x_2$}
\rput[bl](4.16,-2.56){$y_1$}
\rput[bl](9.82,-2.5){$y_2$}
\rput[bl](2.06,-4.8){$G_1$}
\rput[bl](7.5,-4.76){$G_2$}
\pscircle[linecolor=black, linewidth=0.08, dimen=outer](15.06,-2.4){1.6}
\psellipse[linecolor=black, linewidth=0.08, dimen=outer](18.46,-2.4)(1.8,0.8)
\psdots[linecolor=black, dotstyle=o, dotsize=0.4, fillcolor=white](13.46,-2.4)
\psdots[linecolor=black, dotstyle=o, dotsize=0.4, fillcolor=white](16.66,-2.4)
\psdots[linecolor=black, dotstyle=o, dotsize=0.4, fillcolor=white](16.66,-2.4)
\psdots[linecolor=black, dotstyle=o, dotsize=0.4, fillcolor=white](20.26,-2.4)
\rput[bl](12.8,-2.56){$x_1$}
\rput[bl](20.62,-2.5){$y_2$}
\rput[bl](15.86,-4.8){$C(G_1,G_2)$}
\rput[bl](16.66,-1.96){$z$}
\end{pspicture}
}
		\end{center}
		\caption{Graphs $G_1,G_2$ and $C(G_1,G_2)$ with respect to vertices $y_1$ and $x_2$, respectively.} \label{chain-2}
	\end{figure}	
	
	\begin{proof}
First, we find an upper bound for $\gamma _{sp}(C(G_1,G_2))$. Let $S_1$ be a super dominating set of $G_1$ and $\gamma _{sp}(G_1)=|S_1|$, and also  $S_2$ be a super dominating set of $G_2$ and $\gamma _{sp}(G_2)=|S_2|$. we have the following cases: 
\begin{itemize}
\item[(i)]
$y_1 \in S_1$ and $x_2 \in S_2$. In this case, $y_1$ and $x_2$ may have or have not influence on the vertices in $\overline{S_1}$ and $\overline{S_2}$, respectively. So we consider the following cases:
\begin{itemize}
\item[(i.1)]
There exists $g_1\in N(y_1)$ such that  $N(y_1)\cap \overline{S_1} = \{g_1\}$ and $g_2\in N(x_2)$ such that  $N(x_2)\cap \overline{S_2} = \{g_2\}$. So $N(y_1)-\{g_1\}\subseteq S_1$  and $N(y_2)-\{g_2\}\subseteq S_2$.  Let
$$S=\left( S_1 \cup S_2 \cup \{z,g_1\} \right)-\{y_1,x_2\}. $$
$S$ is a super dominating set for $C(G_1,G_2)$, because $g_2$ is dominated by $z$ and since all neighbours of $y_1$ are in $S$ now, then $N(z)\cap \overline{S} = \{g_2\}$. The rest of vertices in $\overline{S}$ are dominated by the same vertex as before and the definition of super dominating set holds. So in this case,
$$ \gamma _{sp}(C(G_1,G_2)) \leq \gamma _{sp}(G_1)+\gamma _{sp}(G_2).$$

\item[(i.2)]
There exists $g_1\in N(y_1)$ such that  $N(y_1)\cap \overline{S_1} = \{g_1\}$ but there does not exist $g_2\in N(x_2)$ such that  $N(x_2)\cap \overline{S_2} = \{g_2\}$. We know that $N(y_1)-\{g_1\}\subseteq S_1$, but  we may have more than one vertex in $N(x_2)\cap \overline{S_2}$ or may have all vertices in $N(x_2)$ as a subset of $S_2$. Since we have no knowledge about $N(x_2)\cap \overline{S_2}$, let
$$S=\left( S_1 \cup S_2 \cup \{z,g_1\} \right)-\{y_1,x_2\}. $$
Clearly, $S$ is a super dominating set for $C(G_1,G_2)$ since all vertices in $\overline{S}$ are dominated by the same vertex as before. Hence
$$ \gamma _{sp}(C(G_1,G_2)) \leq \gamma _{sp}(G_1)+\gamma _{sp}(G_2).$$

\item[(i.3)]
There does not exist $g_1\in N(y_1)$ such that  $N(y_1)\cap \overline{S_1} = \{g_1\}$ but there exists $g_2\in N(x_2)$ such that  $N(x_2)\cap \overline{S_2} = \{g_2\}$. It is similar to part (i.2).

\item[(i.4)]
There does not exist $g_1\in N(y_1)$ such that  $N(y_1)\cap \overline{S_1} = \{g_1\}$ and there does not exist $g_2\in N(x_2)$ such that  $N(x_2)\cap \overline{S_2} = \{g_2\}$. We may have more than one vertex in $N(y_1)\cap \overline{S_1}$ or may have all vertices in $N(y_1)$ as a subset of $S_1$, and  same argument about $x_2$. Let
$$S=\left( S_1 \cup S_2 \cup \{z\} \right)-\{y_1,x_2\}. $$
Then all vertices in $\overline{S}$ are dominated by the same vertex as before and the definition of the super dominating set holds. So we have 
$$ \gamma _{sp}(C(G_1,G_2)) \leq \gamma _{sp}(G_1)+\gamma _{sp}(G_2)-1.$$

\end{itemize}

\item[(ii)]
$y_1 \in S_1$ and $x_2 \notin S_2$. In this case, we only pay attention to  $y_1$. So we consider the following cases:

\begin{itemize}
\item[(ii.1)]
There exists $g_1\in N(y_1)$ such that  $N(y_1)\cap \overline{S_1} = \{g_1\}$. So $N(y_1)-\{g_1\}\subseteq S_1$.  Let
$$S=\left( S_1 \cup S_2 \cup \{g_1\} \right)-\{y_1\}. $$
We show that $S$ is a super dominating set for $C(G_1,G_2)$. By the definition of $S$ we have $g_1\in S$, so we do not need to consider it in the definition of super dominating set. Since $x_2 \notin S_2$, then there exists $h\in S_2$ such that  $N(h)\cap \overline{S_2} = \{x_2\}$. Now we consider to $z$ and clearly we have $N(h)\cap \overline{S} = \{z\}$. The rest of vertices in $\overline{S}$ are dominated by the same vertex as before and the definition of the super dominating set holds. So 
$$ \gamma _{sp}(C(G_1,G_2)) \leq \gamma _{sp}(G_1)+\gamma _{sp}(G_2).$$

\item[(ii.2)]
There does not exist $g_1\in N(y_1)$ such that  $N(y_1)\cap \overline{S_1} = \{g_1\}$. So simply let 
$$S=\left( S_1 \cup S_2 \cup \{z\} \right)-\{y_1\}. $$
By an easy argument same as before, we conclude that $S$ is a super dominating set for $C(G_1,G_2)$ and therefore
$$ \gamma _{sp}(C(G_1,G_2)) \leq \gamma _{sp}(G_1)+\gamma _{sp}(G_2).$$

\end{itemize}

\item[(iii)]
$y_1 \notin S_1$ and $x_2 \in S_2$. It is similar to part (ii).

\item[(iv)]
$y_1 \notin S_1$ and $x_2 \notin S_2$. Let $S=S_1 \cup S_2$. Then by similar argument as part (ii.1), $S$ is a super dominating set for $C(G_1,G_2)$ and hence
$$ \gamma _{sp}(C(G_1,G_2)) \leq \gamma _{sp}(G_1)+\gamma _{sp}(G_2).$$

\end{itemize}
Therefore in all cases we have $\gamma _{sp}(C(G_1,G_2)) \leq \gamma _{sp}(G_1)+\gamma _{sp}(G_2)$. Now we find a lower bound for $\gamma _{sp}(C(G_1,G_2))$. First we find a super dominating set for $C(G_1,G_2)$. Let this set be $D$ and $\gamma _{sp}(C(G_1,G_2))=|D|$. Now by using this set, we find super dominating sets for $G_1$ and $G_2$. Consider to the following cases:

\begin{itemize}
\item[(i)]
$z\in D$. In this case, $z$  may has or has not influence on the vertices in $\overline{D}$. So we consider the following cases:
\begin{itemize}
\item[(i.1)]
There exists $u\in N(z)$ such that  $N(z)\cap \overline{D} = \{u\}$. So $N(z)-\{u\}\subseteq D$ and therefore all other neighbours of $z$ are in $D$. Without loss of generality, suppose that $u\in V(G_1)$. Now we separate components $G_1$ and $G_2$ from $C(G_1,G_2)$ and form a disconnected graph with components $G_1$ and $G_2$, replace vertex $z$ with $x_2$ in $G_1$ and replace vertex $z$ with $y_1$ in $G_2$ (see Figure \ref{chain-2}). Let 
$$D_1=\left(  D \cup \{y_1\}\right)-\left( V(G_2) \cup \{z\} \right).$$
We show that $D_1$ is a super dominating set for $G_1$. $u$ is  dominated by $y_1 \in D_1$ now and since $N(z)-\{u\}\subseteq D$, then $N(y_1)-\{u\}\subseteq D_1$. Hence $N(y_1)\cap \overline{D_1} = \{u\}$. The rest of vertices in $\overline{D_1}$ are dominated by the same vertex as before and the definition of the super dominating set holds. So $\gamma _{sp}(G_1)\leq|D_1|$. Now we consider to $G_2$. Let 
$$D_2=\left(  D \cup \{x_2\}\right)-\left( V(G_1) \cup \{z\} \right).$$
Since $x_2 \in D_2$, clearly all vertices in $\overline{D_2}$ are dominated by the same vertex as before. So the definition of the super dominating set holds and $\gamma _{sp}(G_2)\leq|D_2|$. By Proposition \ref{pro-disconnect}, super domination number of a disconnected graph with components $G_1$ and $G_2$ is the summation of cardinal of each super dominating set of them. Since 
$$D_1\cup D_2=\left( D \cup \{y_1,x_2\}\right) - \{z\},$$ 
and $D_1\cap D_2= \{\}$, then
$$  \gamma _{sp}(G_1)+\gamma _{sp}(G_2)\leq|D_1|+|D_2|=|D_1\cup D_2|=\gamma _{sp}(C(G_1,G_2))+1.$$

\item[(i.2)]
There does not exist $u\in N(z)$ such that  $N(z)\cap \overline{D} = \{u\}$. Same as previous case, we form $G_1$ and $G_2$. Let 
$$D_1=\left(  D \cup \{y_1\}\right)-\left( V(G_2) \cup \{z\} \right),$$
and
$$D_2=\left(  D \cup \{x_2\}\right)-\left( V(G_1) \cup \{z\} \right).$$
All vertices in $\overline{D_1}$ and $\overline{D_2}$ are dominated by the same vertex as before. So by similar argument as previous case, we have 
$$  \gamma _{sp}(G_1)+\gamma _{sp}(G_2)\leq \gamma _{sp}(C(G_1,G_2))+1.$$
\end{itemize}

\item[(ii)]
$z\notin D$. So there exists $v\in D$ such that $N(v)\cap \overline{D} = \{z\}$. We form $G_1$ and $G_2$ same as part (i.1). Without loss of generality, suppose that $v\in V(G_1)$. Let 
$$D_1= D - V(G_2) ,$$
and
$$D_2=\left(  D \cup \{x_2\}\right)- V(G_1).$$
$D_1$ is a super dominating set for $G_1$ because  $y_1$ is dominated by $v$  and  $N(v)\cap \overline{D} = \{y_1\}$, and the rest of vertices in $\overline{D_1}$ are dominated by the same vertex as before. So $\gamma _{sp}(G_1)\leq|D_1|$. Since $x_2 \in D_2$, all vertices in $\overline{D_2}$ are dominated by the same vertex as before  and the definition of super dominating set holds. So $D_2$ is a super dominating set for $G_2$. Hence $\gamma _{sp}(G_2)\leq|D_2|$. Since $D_1\cup D_2= D \cup \{x_2\}$, and $D_1\cap D_2= \{\}$, then
$$  \gamma _{sp}(G_1)+\gamma _{sp}(G_2)\leq\gamma _{sp}(C(G_1,G_2))+1.$$

\end{itemize}
Hence in all cases, $  \gamma _{sp}(G_1)+\gamma _{sp}(G_2)\leq\gamma _{sp}(C(G_1,G_2))+1$, and therefore we have the result.
\qed
	\end{proof}

	\begin{remark}
Bounds in the Theorem \ref{chain2-thm} are sharp. For the upper bound, it suffices to consider $G_1=G_2=P_3$. Then by Theorem \ref{thm-2}, $ \gamma _{sp}(G_1)=\gamma _{sp}(G_2)=2$. Now let $y_1$ and $x_2$ be the vertex with degree 2 in $G_1$ and $G_2$, respectively. One can easily check that $C(G_1,G_2)=K_{1,4}$, and by Theorem \ref{thm-2}, $\gamma _{sp}(C(G_1,G_2))=4=\gamma _{sp}(G_1)+\gamma _{sp}(G_2)$. For the lower bound, it suffices to consider $H_1=F_4$ and $H_2=F_5$, where $F_n$ is the friendship graph of order $n$. Then by Theorem \ref{Firend-thm}, $ \gamma _{sp}(H_1)=5$ and $\gamma _{sp}(H_2)=6$. Now let $y_1$  be the vertex with degree 8 in $H_1$ and $x_2$ be the vertex with degree 10 in $H_2$, respectively. One can easily check that $C(H_1,H_2)=F_9$, and by Theorem \ref{Firend-thm}, $\gamma _{sp}(C(H_1,H_2))=10=\gamma _{sp}(H_1)+\gamma _{sp}(H_2)-1$.
	\end{remark}

We end this section by an immediate result of 	Theorem \ref{chain2-thm}.
	
	\begin{corollary}
Let $G_1,G_2, \ldots , G_n$ be a finite sequence of pairwise disjoint connected graphs and let
		$x_i,y_i \in V(G_i)$. Let $C(G_1,...,G_n)$ be the chain of graphs $\{G_i\}_{i=1}^n$ with respect to the vertices $\{x_i, y_i\}_{i=1}^k$ which obtained by identifying the vertex $y_i$ with the vertex $x_{i+1}$ for $i=1,2,\ldots,n-1$ (Figure \ref{chain-n}). Then,
		$$ \left( \sum_{i=1}^{n} \gamma _{sp}(G_i) \right) - n \leq \gamma _{sp}(C(G_1,...,G_n)) \leq \sum_{i=1}^{n} \gamma _{sp}(G_i).$$
	\end{corollary}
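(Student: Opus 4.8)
The plan is to proceed by induction on $n$, using Theorem \ref{chain2-thm} as both the base case and the engine of the inductive step. For $n=1$ the chain $C(G_1)$ is just $G_1$, and the claimed inequalities read $\gamma_{sp}(G_1)-1 \le \gamma_{sp}(G_1) \le \gamma_{sp}(G_1)$, which hold trivially; for $n=2$ the statement is exactly Theorem \ref{chain2-thm}.

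For the inductive step, the key observation I would make is that a chain of $n$ graphs can be regarded as a chain of two graphs. Set $H = C(G_1,\ldots,G_{n-1})$, which is a connected graph (each $G_i$ is connected and consecutive ones share a vertex), and distinguish in it the vertices $x_1$ and $y_{n-1}$. Then $C(G_1,\ldots,G_n)$ is obtained from $H$ and $G_n$ by identifying $y_{n-1}$ (the ``$y$'' vertex of $H$) with $x_n$ (the ``$x$'' vertex of $G_n$); that is, $C(G_1,\ldots,G_n) = C(H,G_n)$ in the sense of Theorem \ref{chain2-thm}, with $H$ playing the role of the first building stone.

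Applying Theorem \ref{chain2-thm} to the pair $H, G_n$ gives
$$\gamma_{sp}(H)+\gamma_{sp}(G_n)-1 \le \gamma_{sp}(C(G_1,\ldots,G_n)) \le \gamma_{sp}(H)+\gamma_{sp}(G_n).$$
By the induction hypothesis applied to $H$ we have $\left(\sum_{i=1}^{n-1}\gamma_{sp}(G_i)\right)-(n-1) \le \gamma_{sp}(H) \le \sum_{i=1}^{n-1}\gamma_{sp}(G_i)$. Substituting the upper estimate of $\gamma_{sp}(H)$ into the upper bound yields $\gamma_{sp}(C(G_1,\ldots,G_n)) \le \sum_{i=1}^{n}\gamma_{sp}(G_i)$, while substituting the lower estimate into the lower bound yields $\gamma_{sp}(C(G_1,\ldots,G_n)) \ge \left(\sum_{i=1}^{n}\gamma_{sp}(G_i)\right)-n$, which is precisely the asserted two-sided bound.

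The one point requiring care — and really the only obstacle, since the corollary is routine once the recursion is set up — is to justify the identification $C(G_1,\ldots,G_n)=C(H,G_n)$, i.e.\ that forming the full chain in a single pass yields the same graph as first building $H$ and then attaching $G_n$ to it at $y_{n-1}=x_n$. This is immediate from the inductive definition of the chain, but it must be recorded explicitly so that Theorem \ref{chain2-thm} legitimately applies to $H$ as a single connected building stone with the chosen vertices $x_1,y_{n-1}$. After that, the proof is nothing more than adding the two induction inequalities.
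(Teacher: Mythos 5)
Your proof is correct and is exactly the argument the paper intends: the corollary is stated as an immediate consequence of Theorem \ref{chain2-thm}, obtained by induction on $n$, viewing $C(G_1,\ldots,G_n)$ as the chain of the two connected graphs $H=C(G_1,\ldots,G_{n-1})$ and $G_n$. (In fact, as your recursion shows, the same induction even yields the slightly stronger lower bound $\left(\sum_{i=1}^{n}\gamma_{sp}(G_i)\right)-(n-1)$, but the stated bound certainly follows.)
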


\section{Super domination number of bouquet of graphs}
In this section, we consider to another special case of graphs which is obtained by point-attaching from primary subgraphs. 
Let $G_1,G_2, \ldots , G_n$ be a finite sequence of pairwise disjoint connected graphs and let
		$x_i \in V(G_i)$. Let $B(G_1,...,G_n)$ be the bouquet of graphs $\{G_i\}_{i=1}^n$ with respect to the vertices $\{x_i\}_{i=1}^n$ and obtained by identifying the vertex $x_i$ of the graph $G_i$ with $x$ (see Figure \ref{bouquet-n}).
		
			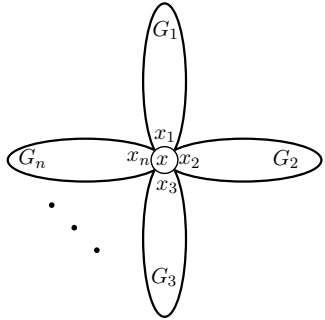
\begin{figure}[!h]
		\begin{center}
			\psscalebox{0.75 0.75}
			{
				\begin{pspicture}(0,-6.76)(5.6,-1.16)
				\rput[bl](2.6133332,-3.64){$x_1$}
				\rput[bl](3.0533333,-4.0933332){$x_2$}
				\rput[bl](2.6533334,-4.5466666){$x_3$}
				\rput[bl](2.5866666,-1.8133334){$G_1$}
				\rput[bl](4.72,-4.1066666){$G_2$}
				\rput[bl](2.56,-6.2){$G_3$}
				\rput[bl](2.1333334,-4.04){$x_n$}
				\rput[bl](0.21333334,-4.0933332){$G_n$}
				\psellipse[linecolor=black, linewidth=0.04, dimen=outer](1.4,-3.96)(1.4,0.4)
				\psellipse[linecolor=black, linewidth=0.04, dimen=outer](2.8,-2.56)(0.4,1.4)
				\psellipse[linecolor=black, linewidth=0.04, dimen=outer](4.2,-3.96)(1.4,0.4)
				\psellipse[linecolor=black, linewidth=0.04, dimen=outer](2.8,-5.36)(0.4,1.4)
				\psdots[linecolor=black, dotsize=0.1](0.8,-4.76)
				\psdots[linecolor=black, dotsize=0.1](1.2,-5.16)
				\psdots[linecolor=black, dotsize=0.1](1.6,-5.56)
				\psdots[linecolor=black, dotstyle=o, dotsize=0.5, fillcolor=white](2.8,-3.96)
				\rput[bl](2.6533334,-4.04){$x$}
				\end{pspicture}
			}
		\end{center}
		\caption{Bouquet of $n$ graphs $G_1,G_2, \ldots , G_n$ and $x_1=x_2=\ldots=x_n=x$.} \label{bouquet-n}
	\end{figure}

Clearly, bouquet of two graphs $G_1$ and $G_2$ with respect to vertices $x_1\in V(G_1)$ and $x_2\in V(G_2)$, is the same as chain of these two graphs. So by Theorem \ref{chain2-thm}, we have: 

	\begin{proposition}\label{bouquet2-prop}
	Let $G_1$ and $G_2$ be two  disjoint connected graphs and let
		$x_i \in V(G_i)$ for $i\in \{1,2\}$. Let $B(G_1,G_2)$ be the bouquet of graphs $\{G_i\}_{i=1}^2$ with respect to the vertices $\{x_i\}_{i=1}^2$ which obtained by identifying the vertex $x_1$ with the vertex $x_{2}$. Then,
$$\gamma _{sp}(G_1)+\gamma _{sp}(G_2) -1\leq   \gamma _{sp}(B(G_1,G_2)) \leq \gamma _{sp}(G_1)+\gamma _{sp}(G_2).$$
	\end{proposition}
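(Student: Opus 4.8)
The plan is to recognize that for exactly two graphs the bouquet construction and the chain construction coincide, so that the statement reduces verbatim to Theorem~\ref{chain2-thm}. First I would unwind the two definitions side by side. The chain $C(G_1,G_2)$ is built by choosing a vertex $y_1\in V(G_1)$ and a vertex $x_2\in V(G_2)$ and identifying them into a single vertex $z$; the bouquet $B(G_1,G_2)$ is built by choosing a vertex $x_1\in V(G_1)$ and a vertex $x_2\in V(G_2)$ and identifying them into the common vertex $x$. In both constructions the choice of which vertex of $G_1$ to glue is completely free, and no edges are added or removed beyond the identification itself.

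Consequently I would argue that, upon relabelling $y_1:=x_1$ and $z:=x$, the two graphs $C(G_1,G_2)$ and $B(G_1,G_2)$ are identical: they share the same vertex set (the disjoint union of $V(G_1)$ and $V(G_2)$ with the two chosen vertices merged) and the same edge set (the union of $E(G_1)$ and $E(G_2)$, with incidences transferred to the merged vertex). Since the super domination number is a graph invariant, this gives $\gamma_{sp}(B(G_1,G_2))=\gamma_{sp}(C(G_1,G_2))$.

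With this identification in hand, the conclusion is immediate. Theorem~\ref{chain2-thm} gives
$$\gamma_{sp}(G_1)+\gamma_{sp}(G_2)-1\leq \gamma_{sp}(C(G_1,G_2))\leq \gamma_{sp}(G_1)+\gamma_{sp}(G_2),$$
and substituting $\gamma_{sp}(C(G_1,G_2))=\gamma_{sp}(B(G_1,G_2))$ yields exactly the claimed bounds.

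There is no real obstacle here; the only point requiring care is the bookkeeping of the definitions, namely confirming that \emph{bouquet} and \emph{chain} impose no structural difference when only two building blocks are involved. (For $n\geq 3$ they genuinely differ, since the chain glues consecutive graphs along a path, whereas the bouquet glues all graphs at one common vertex; it is precisely this distinction that collapses when $n=2$.) I would therefore keep the write-up to a single sentence of justification followed by the invocation of Theorem~\ref{chain2-thm}.
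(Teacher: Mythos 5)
Your proposal is correct and is exactly the paper's argument: the paper also observes that the bouquet of two graphs coincides with the chain of two graphs (since for $n=2$ both constructions amount to a single identification of one chosen vertex from each graph) and then invokes Theorem~\ref{chain2-thm}. Your write-up merely spells out the relabelling $y_1:=x_1$, $z:=x$ more explicitly than the paper's one-line justification.
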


Here we consider to bouquet of three graphs and find upper and lower bounds for the super domination number of that.

	\begin{theorem}\label{bouquet3-thm}
Let $G_1$, $G_2$ and $G_3$ be two  disjoint connected graphs and let
		$x_i \in V(G_i)$ for $i\in \{1,2,3\}$. Let $B(G_1,G_2,G_3)$ be the bouquet of graphs $\{G_i\}_{i=1}^3$ with respect to the vertices $\{x_i\}_{i=1}^3$ which obtained by identifying these vertices. Then,
$$\gamma _{sp}(G_1)+\gamma _{sp}(G_2)+\gamma _{sp}(G_3) -2\leq   \gamma _{sp}(B(G_1,G_2,G_3)) \leq \gamma _{sp}(G_1)+\gamma _{sp}(G_2)+\gamma _{sp}(G_3).$$	
	\end{theorem}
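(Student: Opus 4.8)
The plan is to avoid a fresh case analysis altogether and instead reduce the three-graph bouquet to two successive applications of the two-graph bound already in hand. The key structural observation is that the bouquet construction is associative at the common vertex: if we first form $H := B(G_1,G_2)$ by identifying $x_1$ with $x_2$ into a single vertex $x$, and then form the bouquet of $H$ and $G_3$ by identifying that same vertex $x$ with $x_3$, the resulting graph is exactly $B(G_1,G_2,G_3)$. Thus $B(G_1,G_2,G_3)=B(H,G_3)$ with $H=B(G_1,G_2)$.

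First I would record that $H$ is a legitimate object for Proposition \ref{bouquet2-prop}: since $G_1$ and $G_2$ are disjoint and connected, $H$ is connected, and since $G_3$ is disjoint from both $G_1$ and $G_2$, it is disjoint from $H$. Applying Proposition \ref{bouquet2-prop} to the pair $(G_1,G_2)$ gives
$$\gamma_{sp}(G_1)+\gamma_{sp}(G_2)-1\leq \gamma_{sp}(H)\leq \gamma_{sp}(G_1)+\gamma_{sp}(G_2),$$
and applying it again to the pair $(H,G_3)$ gives
$$\gamma_{sp}(H)+\gamma_{sp}(G_3)-1\leq \gamma_{sp}(B(G_1,G_2,G_3))\leq \gamma_{sp}(H)+\gamma_{sp}(G_3).$$

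Chaining these yields the claim. For the upper bound, substituting $\gamma_{sp}(H)\leq \gamma_{sp}(G_1)+\gamma_{sp}(G_2)$ into the right-hand inequality gives $\gamma_{sp}(B(G_1,G_2,G_3))\leq \gamma_{sp}(G_1)+\gamma_{sp}(G_2)+\gamma_{sp}(G_3)$. For the lower bound, substituting $\gamma_{sp}(H)\geq \gamma_{sp}(G_1)+\gamma_{sp}(G_2)-1$ into the left-hand inequality gives $\gamma_{sp}(B(G_1,G_2,G_3))\geq \gamma_{sp}(G_1)+\gamma_{sp}(G_2)+\gamma_{sp}(G_3)-2$, since each application of the two-graph bound loses exactly one and there are two of them.

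The only genuinely delicate point — and the one I would write out carefully — is the associativity claim that $B(H,G_3)=B(G_1,G_2,G_3)$ as graphs. This amounts to checking that identifying three vertices simultaneously at a single point produces the same adjacency structure as identifying two of them first and then merging the result with the third, which is immediate from the definition of point-attaching but deserves an explicit sentence. Beyond that there is no obstacle: all the domination-theoretic content is already packaged inside Proposition \ref{bouquet2-prop}, so no direct manipulation of super dominating sets is required, and this iterative argument in fact extends verbatim to give the $n$-fold bounds stated in the preceding corollary.
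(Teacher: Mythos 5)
Your proposal is correct and is essentially identical to the paper's own proof: the paper likewise sets $H=B(G_1,G_2)$, observes $B(G_1,G_2,G_3)=B(H,G_3)$, and chains Proposition \ref{bouquet2-prop} twice to obtain both bounds. Your explicit remarks on the connectedness of $H$ and on the associativity of the identification are sensible touches the paper leaves implicit, but they do not change the argument.
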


	\begin{proof}
First we consider to $G_1$ and $G_2$. Suppose that $H=B(G_1,G_2)$ with respect to the vertices $\{x_i\}_{i=1}^2$ which obtained by identifying the vertex $x_1$ with the vertex $x_{2}$. Let this vertex be $y$. Now we consider to graphs $H$ and $G_3$ and let $B(H,G_3)$  be the bouquet of these graphs with respect to the vertices $y$ and $x_3$. Clearly, we have $B(G_1,G_2,G_3)=B(H,G_3)$. First we find the lower bound. By Proposition \ref{bouquet2-prop}, we have: 
		\begin{align*}
		\gamma _{sp}(B(G_1,G_2,G_3)) &= \gamma _{sp}(B(H,G_3)) \\
		&\geq \gamma _{sp}(H)+\gamma _{sp}(G_3) -1 \\
		&= \gamma _{sp}(B(G_1,G_2))+\gamma _{sp}(G_3) -1 \\
		&\geq \gamma _{sp}(G_1)+\gamma _{sp}(G_2)+\gamma _{sp}(G_3) -2. 
		\end{align*}
By the same argument, we have $\gamma _{sp}(B(G_1,G_2,G_3)) \leq \gamma _{sp}(G_1)+\gamma _{sp}(G_2)+\gamma _{sp}(G_3)$, and therefore we have the result.		
\qed
	\end{proof}

As an immediate result of Proposition \ref{bouquet2-prop} and Theorem \ref{bouquet3-thm}, by using induction we have:

	\begin{corollary}\label{Cor-bouq-n}
Let $G_1,G_2, \ldots , G_n$ be a finite sequence of pairwise disjoint connected graphs and let
		$x_i,y_i \in V(G_i)$. Let $B(G_1,...,G_n)$ be the bouquet of graphs $\{G_i\}_{i=1}^n$ with respect to the vertices $\{x_i\}_{i=1}^k$ which obtained by identifying these vertices. Then,
		$$ \left( \sum_{i=1}^{n} \gamma _{sp}(G_i) \right) - n +1 \leq \gamma _{sp}(B(G_1,...,G_n)) \leq \sum_{i=1}^{n} \gamma _{sp}(G_i).$$
	\end{corollary}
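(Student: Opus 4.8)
The plan is to prove both inequalities simultaneously by induction on $n$, mirroring the argument used for Theorem \ref{bouquet3-thm}. The base cases $n=1$ (a trivial equality) and $n=2$ (Proposition \ref{bouquet2-prop}) are already available, so I would fix $n\geq 3$ and assume the statement holds for every bouquet of fewer than $n$ graphs.

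The first step is to record the recursive structure of the bouquet. Writing $H=B(G_1,\ldots,G_{n-1})$ for the bouquet formed by identifying $x_1,\ldots,x_{n-1}$ to a single vertex $x'$, the key observation is that identifying $x'$ with $x_n$ produces exactly $B(G_1,\ldots,G_n)$; that is, $B(G_1,\ldots,G_n)=B(H,G_n)$. Since each $G_i$ is connected and they share the common vertex, $H$ is connected, and $G_n$ is disjoint from $H$, so Proposition \ref{bouquet2-prop} applies to the pair $(H,G_n)$ and gives
$$\gamma_{sp}(H)+\gamma_{sp}(G_n)-1\leq \gamma_{sp}(B(G_1,\ldots,G_n))\leq \gamma_{sp}(H)+\gamma_{sp}(G_n).$$

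The second step is to substitute the inductive hypothesis for $H$, namely $\big(\sum_{i=1}^{n-1}\gamma_{sp}(G_i)\big)-(n-1)+1\leq \gamma_{sp}(H)\leq \sum_{i=1}^{n-1}\gamma_{sp}(G_i)$. For the upper bound, combining $\gamma_{sp}(H)\leq \sum_{i=1}^{n-1}\gamma_{sp}(G_i)$ with the right inequality above immediately yields $\gamma_{sp}(B(G_1,\ldots,G_n))\leq \sum_{i=1}^{n}\gamma_{sp}(G_i)$. For the lower bound, inserting $\gamma_{sp}(H)\geq \big(\sum_{i=1}^{n-1}\gamma_{sp}(G_i)\big)-n+2$ into the left inequality gives $\gamma_{sp}(B(G_1,\ldots,G_n))\geq \big(\sum_{i=1}^{n}\gamma_{sp}(G_i)\big)-n+1$ once the constants $-(n-1)+1$ and $-1$ are collected. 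This closes the induction.

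No genuinely hard obstacle arises; the mathematical content is entirely contained in Proposition \ref{bouquet2-prop}, and the remainder is bookkeeping of the additive constants. The one point deserving care is the associativity claim $B(G_1,\ldots,G_n)=B(H,G_n)$: one must verify that the vertex of $H$ used to attach $G_n$ is precisely the identified vertex $x'$, so that a single point $x$ ends up shared by all $n$ subgraphs, exactly as required by the definition of the bouquet. Once this is checked, the two inequalities follow by the constant-tracking above, in the same way Theorem \ref{bouquet3-thm} handled the case $n=3$.
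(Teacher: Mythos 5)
Your proposal is correct and follows essentially the same route as the paper: the paper proves the case $n=3$ (Theorem \ref{bouquet3-thm}) by writing $B(G_1,G_2,G_3)=B(H,G_3)$ with $H=B(G_1,G_2)$ and applying Proposition \ref{bouquet2-prop}, and then states the corollary as following "by using induction" --- exactly the induction you carry out. Your write-up merely makes explicit the constant bookkeeping and the connectedness/associativity checks that the paper leaves implicit.
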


We end this section by showing that bounds for $\gamma _{sp}(B(G_1,...,G_n))$ are sharp.

	\begin{remark}
Bounds in Corollary 	\ref{Cor-bouq-n} are sharp. For the lower bound, it suffices to consider $G_1=G_2= \ldots = G_n=F_2$ where $F_n$ is the friendship graph of order $n$ and  let $x_i$ for $i=1,2,\ldots,n$ be the vertex with degree 4 in $F_2$. One can easily check that $B(G_1,...,G_n)=F_{2n}$. By Theorem \ref{Firend-thm}, we have $\gamma _{sp}(B(G_1,...,G_n))=2n+1$. Also $\gamma _{sp}(F_2)=3$. So   $\gamma _{sp}(B(G_1,...,G_n))=\left( \sum_{i=1}^{n} \gamma _{sp}(G_i) \right) - n +1$. For the upper bound, it suffices to consider $H_1=H_2= \ldots = H_n=P_2$ where $P_n$ is the path graph of order $n$. Clearly, we have $\gamma _{sp}(P_2)=1$ and $B(H_1,...,H_n)=K_{1,n}$. By Theorem \ref{thm-2}, $\gamma _{sp}(B(H_1,...,H_n))=n$. Hence, $\gamma _{sp}(B(H_1,...,H_n))=\left( \sum_{i=1}^{n} \gamma _{sp}(H_i) \right)$.
	\end{remark}

\section{Conclusions}

In this paper, we obtained a sharp upper bound for super domination number of graphs which modified  by  operation $\odot$ on vertices.  Also
we presented some results for super domination number of chain and bouquet of finite sequence of pairwise disjoint connected graphs. Future topics of interest for future research include the following suggestions:

\begin{itemize}
\item[(i)]
Finding sharp lower bound for super domination number of $G\odot v$.
\item[(ii)]
Finding super  domination number of link and circuit of graphs.
\item[(iii)]
Finding super domination number of subdivision and power of a graph.
\item[(iv)]
Counting the number of super dominating sets of graph $G$ with size $k\geq \gamma _{sp}(G)$.
\end{itemize}

	\section{Acknowledgements} 

	The  author would like to thank the Research Council of Norway and Department of Informatics, University of Bergen for their support.

\end{document}